\pgfplotsset{compat=1.15}
\def\makeCal#1{%
\expandafter\newcommand\csname c#1\endcsname{\mathcal{#1}}}
\def\makeBB#1{%
\expandafter\newcommand\csname b#1\endcsname{\mathbb{#1}}}
\def\makeFrak#1{%
\expandafter\newcommand\csname f#1\endcsname{\mathfrak{#1}}}
\edef\y{\@Alph\count@}%
\theoremstyle{plain}
\newtheorem{thm}{Theorem}[section]
\newtheorem{conj}[thm]{Conjecture}
\theoremstyle{definition}
\newtheorem{rem}[thm]{Remark}
\newtheorem{defn}[thm]{Definition}
\newtheorem{ex}[thm]{Example}
\newtheorem*{notn*}{Notation}
\DeclareMathOperator{\DCoh}{D^b_{coh}}
\DeclareMathOperator{\Ext}{Ext}
\DeclareMathOperator{\Hom}{Hom}
\DeclareMathOperator{\RHom}{RHom}
\DeclareMathOperator{\Pic}{Pic}
\DeclareMathOperator{\Perf}{Perf}
\newcommand*{\sheafhom}{\mathcal{H} \kern -.5pt om}
   \def\MR#1{}
\def\l@subsection{\@tocline{2}{0pt}{2.5pc}{4pc}{}} 
\begin{document}

\title{Non-existence of phantoms on some non-generic blowups of the projective plane}

\author{Lev Borisov and Kimoi Kemboi}

\begin{abstract}
We show that blowups of the projective plane at points lying on a smooth cubic curve do not contain phantoms, provided the points are chosen in very general position on this curve.
\end{abstract}

\maketitle


\section{Introduction}

Let $X$ be a smooth projective variety. A \emph{phantom} on $X$ is a nontrivial admissible subcategory of the derived category of $X$ that is invisible to additive invariants, that is, its Hochschild homology and Grothendieck group vanish. The expectation that phantoms existed on some simply connected surfaces of general type originally came from mirror symmetry \cite{DKK13}. Further evidence came from the construction on the derived category of the classical Godeaux surface of an admissible subcategory with finite but nontrivial Grothendieck group and trivial Hochschild homology \cite{BBS13}, a so-called quasi-phantom.

The first examples of phantoms were constructed on products of surfaces with quasi-phantoms \cite{GO13} and on determinantal Barlow surfaces \cite{BBKS15}. The phantoms on Barlow surfaces are the orthogonal complements of maximal length exceptional collections that are not full. There had been a general expectation that varieties admitting a full exceptional collection should not have phantoms \cite{K14}. This expectation was upended, first by Efimov who showed that any phantom can be embedded in a category admitting a full exceptional collection \cite{E20}, and recently by Krah's elegant example of a phantom on the blowup of $\bP^2$ at ten points in general position \cite{K24}. It is natural to wonder whether phantoms still exist if one takes the blowup at points that are not in general position. In this note, we make the following contribution.

\begin{thm}[\Cref{T:main_theorem}] \label{T:forward_main}
Let $X$ be a smooth complex projective surface with an effective smooth anti-canonical divisor $E$, with the property that the restriction map $\Pic(X)\to \Pic(E)$ is injective. Then $\DCoh(X)$ has no phantoms. 
\end{thm}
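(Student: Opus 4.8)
The plan is to show that any object $F$ of a hypothetical phantom $\mathcal{A} \subseteq \DCoh(X)$ must vanish. Since $\mathcal{A}$ is admissible, the inclusion induces a split injection $K_0(\mathcal{A}) \hookrightarrow K_0(X)$, so the vanishing of $K_0(\mathcal{A})$ forces $[F] = 0$ in $K_0(X)$ for every $F \in \mathcal{A}$; in particular $\mathrm{ch}(F) = 0$, so $F$ has vanishing rank, $\det F \cong \mathcal{O}_X$, and trivial second Chern character. The point of departure is that a single coherent sheaf with $\mathrm{ch} = 0$ is already zero (rank $0$ forces torsion, $c_1 = 0$ forces zero-dimensional support, and $\chi = 0$ then forces length $0$); thus a nonzero $F$ must be a genuine complex whose cohomology sheaves cancel in $K_0(X)$. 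Note also that, by adjunction, $E$ is a smooth curve of genus one with $\omega_E \cong \mathcal{O}_E$, so $\DCoh(E)$ is a $1$-Calabi--Yau category in which every object splits as the direct sum of (shifts of) its cohomology sheaves.

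The main idea is to detect $F$ through its restriction to the anticanonical curve. Writing $i \colon E \hookrightarrow X$ and letting $s_E \in H^0(X,\omega_X^{-1})$ cut out $E$, one has the Koszul triangle $F \otimes \omega_X \xrightarrow{\,s_E\,} F \to i_* Li^* F \xrightarrow{+1}$, which both computes $Li^* F$ and shows that $Li^* F = 0$ if and only if the support of $F$ is disjoint from $E$. I would first analyze $Li^* F \in \DCoh(E)$: its class is $i^*[F] = 0$, so it has rank $0$, degree $0$, and trivial determinant on $E$. The crucial hypothesis, injectivity of $\Pic(X) \to \Pic(E)$, is what I would use to promote these numerical vanishings into genuine rigidity: restriction to $E$ should be made conservative on the class of numerically trivial objects, and the determinants of the separate cohomology sheaves of $F$ should be pinned down on $E$ so that the positive-dimensional parts of $F$ cannot cancel one another off of $E$. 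The upshot I would aim for is that, modulo this rigidity, every nonzero object of the phantom is forced to be supported on a finite set of points.

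It then remains to exclude nonzero objects supported on finitely many points, and here I would invoke the vanishing of Hochschild homology $\mathrm{HH}_\bullet(\mathcal{A}) = 0$. A nonzero object $G$ supported at a point $p$ lives in the category of finite-length complexes over the local ring $\mathcal{O}_{X,p}$; its endomorphism dg-algebra is nonzero, local, and smooth and proper, and a local computation shows that such an object contributes nontrivially to Hochschild homology (for instance via the nonvanishing of the trace of the identity, so that $\mathrm{HH}_0$ is nonzero on the thick subcategory it generates). Since $\mathcal{A}$ is admissible, hence thick, and $\mathrm{HH}_\bullet(\mathcal{A})=0$, no such $G$ can occur, and we conclude $F = 0$, so $\mathcal{A} = 0$.

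The hard part will be the middle step: making injectivity of $\Pic(X) \to \Pic(E)$ do genuine work beyond the first Chern class. A phantom need not be $\mathrm{Perf}(X)$-linear, so one cannot simply restrict the semiorthogonal decomposition to $E$ via base change and invoke the indecomposability of $\DCoh(E)$ (Okawa) to kill it; the whole argument must instead be run at the level of individual objects and their images under $Li^*$. Moreover $Li^*$ is a priori neither full nor conservative, so the passage from ``$Li^* F$ is trivial'' (or controlled) to ``$F$ is trivial'' rests entirely on the support analysis coming from the Koszul triangle together with the $\Pic$-rigidity. Getting these two ingredients to combine --- ruling out cancellation of distinct positive-dimensional supports using only the injectivity of $\Pic(X) \to \Pic(E)$ --- is where I expect the real difficulty to lie.
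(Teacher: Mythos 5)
Your proposal stalls exactly where you say it will, and that gap is not a technicality---it is the entire content of the paper's proof. Vanishing of $\mathrm{ch}(F)$ plus injectivity of $\Pic(X) \to \Pic(E)$ does \emph{not} confine the support of an arbitrary phantom object: a complex can have two-dimensional support with cohomology sheaves cancelling in $K_0(X)$, and no amount of determinant bookkeeping on $E$ rules this out, since the individual cohomology sheaves of $F$ are not themselves objects of the phantom and carry no $K_0$-constraint. The paper sidesteps this by never analyzing general objects $F \in \cB$ at all; it only analyzes the projections $B_p = \iota_R(j_\ast \cO_p)$ of skyscraper sheaves. The missing idea is the Addington--Pirozhkov spherical functor machinery: because $E$ is anticanonical, $Lj^\ast \iota$ is spherical, so each $\cO_p$ sits in a triangle $Lj^\ast B_p \to \cO_p \to C_p$ with $C_p$ a \emph{spherical object} of $D^b(E)$; since $[B_p] = 0$, the classification of spherical objects on the elliptic curve $E$ forces $C_p \cong \cO_p[2a]$, hence $Lj^\ast B_p$ is supported at $p$. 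Picard-injectivity then enters in two sharp, concrete ways, not as a diffuse ``rigidity'': first, choosing $p$ so that no nonzero multiple of $[p]$ lies in the image of $\Pic(X) \to \Pic(E)$ guarantees no curve meets $E$ only at $p$, forcing $\mathrm{Supp}(B_p) = \{p\}$ (contradiction, so $B_p = 0$ for very general $p$); second, after Huybrechts--Van den Bergh pins the twist down to $T \cong \cL \otimes -$ with $\cL \cong \cO_E$ and hence $Lj^\ast \iota_R j_\ast = 0$, every $j_\ast \cO_p$ lies in $\cA$, so any nonzero $B_x$ has support disjoint from $E$, and injectivity of restriction (a curve $C$ disjoint from $E$ would give $\cO_X(C)\vert_E \cong \cO_E$) kills one-dimensional components. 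None of this is recoverable from your Koszul-triangle-plus-Chern-character framework.

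Your final step is also not a proof as written. Hochschild homology does not restrict along thick subcategories: the thick subcategory generated by a point-supported object $G$ inside $\cA$ is in general not admissible, and the induced map on $HH_\bullet$ need not be injective, so ``the trace of the identity of $\End(G)$ is nonzero'' does not yield $HH_0(\cA) \neq 0$. Moreover, over $\bC$ the vanishing of $HH_\bullet(\cA)$ is automatic once $K_0(\cA) \otimes \bQ = 0$, so an $HH$-based exclusion cannot carry independent force. The statement you need---a phantom contains no nonzero object with zero-dimensional support---is precisely Pirozhkov's Lemma~6.16, a genuinely nontrivial result that the paper cites (together with his Lemma~6.3 on connectedness of supports); your local computation does not reprove it. In summary: you correctly identified that skyscraper-adjacent support analysis plus exclusion of point-supported objects is the right skeleton, but both load-bearing steps (the spherical twist on $E$ and the point-support exclusion) are absent or flawed in your write-up.
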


\begin{rem}
Typical examples of such surfaces are blowups of $\bP^2$ at a finite set of points on a smooth cubic curve, in very general position on the curve. Specifically, we require that the classes of the blowup points and the pullback of the hyperplane class in $\bP^2$ are linearly independent over $\bQ$. Indeed, in this case, the proper preimage of the cubic curve $E$ is smooth and the above restriction map is injective. 
\end{rem} 

Besides the case of varieties with indecomposable derived categories, it is an interesting and difficult question to determine obstructions to the existence of phantoms on a variety. The only general result in this direction is by Pirozhkov \cite{P23}, who shows the non-existence of phantoms for del Pezzo surfaces. Our proof of \Cref{T:forward_main} relies deeply on the tools developed by Pirozhkov to analyze the support of objects in phantom subcategories, and how it interacts with anti-canonical divisors. This perspective leads us to the following.

\begin{conj}
Let $S$ be a smooth projective surface over an algebraically closed field $k$ of characteristic zero. Assume $H^0(S,\cL) \neq 0$ for $\cL$ either $\omega_S$ or $\omega_S^\vee$. Then $S$ has no phantoms.
\end{conj}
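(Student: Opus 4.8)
The plan is to run the strategy of \Cref{T:forward_main} with an effective (anti-)canonical divisor in place of the smooth anticanonical curve $E$, upgrading each step so that it tolerates a singular, reducible, or non-reduced divisor and dispenses with the Picard-injectivity hypothesis. Suppose, for contradiction, that $\cA \subseteq \DCoh(S)$ is a phantom. In either case $\cL = \omega_S$ or $\cL = \omega_S^\vee$ we fix a nonzero effective divisor $D$ with $\cO_S(D) \cong \cL$; since the argument below uses only $D$ together with the fact that its class equals $\pm K_S$, the two cases run in parallel (formally, they are exchanged by replacing the Serre functor $\bS = (-) \otimes \omega_S[2]$ by its inverse), and I suppress the distinction. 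The goal is to show that Pirozhkov's categorical support $Z := \mathrm{Supp}(\cA)$, which is closed and nonempty once $\cA \neq 0$, is in fact empty.

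First I would carry out the curve step, namely that $Z \cap D = \emptyset$. Following \cite{P23}, a point $x \in Z \cap D$ should produce, via the local analysis of the projection functor onto $\cA$ combined with restriction along $D \hookrightarrow S$, a nonzero object of $\DCoh(D)$ lying in an admissible subcategory whose additive invariants vanish --- that is, a phantom on $D$. The required input is therefore the non-existence of phantoms on an \emph{arbitrary} projective curve, not merely a smooth one. Over the smooth locus of $D$ this is classical, since the derived category of a smooth projective curve has no nontrivial phantoms; the new content is to handle the singular points and the several components, which I would do by passing to the normalization of each reduced component and controlling the nilpotent and gluing data, using that $HH_0$ of a nonzero admissible subcategory of a projective curve detects a nonzero structure-sheaf class and hence cannot vanish. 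The zero-dimensional part of $Z$ must be treated separately: a phantom cannot be supported on a finite set, since a nonzero admissible subcategory supported at finitely many points is generated by an exceptional collection and so has nonvanishing Grothendieck group. This yields $Z \cap D = \emptyset$.

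The remaining, and hardest, step is to lift this to $Z = \emptyset$. This is exactly where \Cref{T:forward_main} uses the injectivity of $\Pic(X) \to \Pic(E)$ --- it lets one detect objects of $\cA$ by their derived restriction to $E$, so that triviality along $E$ forces $\cA = 0$ --- and where Pirozhkov's del Pezzo argument instead exploits that smooth anticanonical curves sweep out the whole surface. In the generality of the conjecture neither tool survives: the system $|\cL|$ may consist of the single rigid divisor $D$, and restriction to $D$ need not be faithful on $\cA$. The plan is to replace detection on $D$ by an intrinsic statement: that the categorical support of a phantom cannot be disjoint from a nonzero effective divisor in $|\omega_S^{\pm 1}|$. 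On $S \setminus D$ one has $\omega_{S \setminus D} \cong \cO_{S \setminus D}$ (when $\cL = \omega_S^\vee$), so $Z$ would be a compact support sitting inside a surface with trivial canonical class, and I would aim to contradict this by combining the behaviour of $\cA$ under $\bS$ with a positivity estimate for the Hochschild homology localized along $Z$. The main obstacle is precisely this conversion of ``trivial along $D$'' into ``trivial on $S$'' for a rigid, possibly singular $D$; proving it directly from the vanishing of the additive invariants of $\cA$ is the crux. A secondary and more routine point is the passage from $\bC$ to an arbitrary algebraically closed field of characteristic zero, which I expect to handle by spreading out and base change, once resolution of $D$ and the HKR computation of $HH_*$ are invoked in characteristic zero.
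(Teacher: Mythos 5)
This statement is the paper's open \emph{conjecture}, not a theorem: the authors state it precisely because their method does not prove it, so there is no proof in the paper to compare against, and your proposal does not close the gap either --- it is a program whose two load-bearing steps remain unproven, one of which you yourself flag as ``the crux.'' Concretely, your curve step needs two inputs that are not available. First, the spherical-functor machinery of \Cref{C:triangle_on_anticanonical} rests on Addington's criterion (\cite{A16}*{Prop.~2.1}), which requires the divisor to be \emph{anti}canonical so that the cotwist of $Lj^\ast$ is $\cO_X(-E)[1] \otimes -$; in the case $\cL = \omega_S$ your divisor $D$ is canonical, the composition $Lj^\ast \iota$ is not known to be spherical, and your claim that the two cases ``run in parallel'' under replacing $\bS$ by $\bS^{-1}$ is asserted, not proved. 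Second, you need the non-existence of phantoms on an \emph{arbitrary} (singular, reducible, non-reduced) projective curve; this is open, and your sketch via normalization plus ``controlling the nilpotent and gluing data'' is not an argument --- note that for non-reduced $D$ one must also distinguish $\Perf(D)$ from $\DCoh(D)$, and that the classification of spherical objects the paper leans on (\cite{BK06}*{Prop.~4.13}) is specific to smooth curves, with no analogue in your setting.

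The second step is worse off. In \Cref{T:main_theorem}, converting ``trivial along $E$'' into ``trivial on $X$'' is exactly where the injectivity of $\Pic(X) \to \Pic(E)$ enters (Step 1: no curve meets $E$ only at the very general point $p$; Step 3: no curve is disjoint from $E$), and removing that hypothesis is the entire content of the conjecture. Your replacement --- ``a positivity estimate for the Hochschild homology localized along $Z$'' --- names no existing statement or construction: a phantom has $HH_\bullet = 0$ globally, localized Hochschild homology of an admissible subcategory is not known to satisfy any positivity, and nothing you invoke rules out a phantom whose categorical support is a compact set inside the open Calabi--Yau piece $S \setminus D$. A further minor gap: the reduction of $HH_\bullet$-vanishing to vanishing of $K_0 \otimes \bQ$ (\Cref{R:rational_K_0_controls_K_motive}, via \cite{GO13}*{Thm.~5.5}) is stated over $\bC$, so your ``spreading out'' passage to general algebraically closed fields of characteristic zero is itself an unverified step. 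In summary, you have correctly reverse-engineered the architecture of the paper's proof and identified precisely where its hypotheses are used, but both essential steps of your plan are missing, so this is a research outline rather than a proof of the conjecture.
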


We begin \Cref{S:prelims} with a review of basic properties of admissible subcategories, and the analysis of spherical functors that is the main technical tool in \cite{P23}. We then apply this analysis in \Cref{S:proof} to give a proof of \Cref{T:forward_main}. 

\subsubsection*{Acknowledgments} We thank Johannes Krah and Dmitrii Pirozhkov for helpful comments. The second author also acknowledges the support and hospitality of the Simons Laufer Mathematical Sciences Institute. 

\section{Preliminaries}\label{S:prelims}

\subsection{Admissible categories and phantoms}

Let $\cD$ be a triangulated category.

\begin{defn}
A full triangulated subcategory $\iota : \cB \subset \cD$ is said to be \emph{admissible} if the inclusion functor admits both a right $\iota_R$ and left $\iota_L$ adjoint. 
\end{defn}

When $\cD$ is the derived category of a smooth projective variety, Kuznetsov \cite{K11}*{Thm.~7.1} showed that the right adjoint of $\iota$ is a Fourier-Mukai functor; namely, there exists an object $B_R \in \DCoh(X \times X)$, called a Fourier-Mukai kernel, such that 
\[
    \iota_R (-) = Rq_\ast \left( p^\ast (-) \otimes ^L B_R \right) ,
\]
where $p,q: X\times X \to X$ are respectively the first and second projection functors. This characterization of the right adjoint gives a natural definition of Hochschild homology for an admissible subcategory, see \cite{K11}*{Sect.~7}. We will not require the precise definition of Hochschild homology for an admissible subcategory for reasons that are clarified in \Cref{R:rational_K_0_controls_K_motive}.

\begin{defn}
Let $X$ be a smooth projective variety and let $\cB \subset \DCoh(X)$ be a triangulated subcategory. The Grothendieck group of $\cB$, denoted $K_0(\cB)$, is the abelian group generated by objects of $\cB$ and subject to relations $[F_2] = [F_1] + [F_3]$ for all distinguished triangles $ F_1 \to F_2 \to F_3 \xrightarrow{+   }$ in $\cB$.
\end{defn}

\begin{defn}
Let $X$ be a smooth projective variety. An admissible subcategory $\cB \subset \DCoh(X)$ is said to be \emph{quasi-phantom} if its Hochschild homology  $HH_{\bullet}(\cB)$ is trivial and its Grothendieck group $K_0(\cB)$ is finite. It is a \emph{phantom} if in addition $K_0(\cB) = 0$. 
\end{defn}

\begin{rem} \label{R:rational_K_0_controls_K_motive}
If $X$ is a smooth projective variety over $\bC$, as is the case in \Cref{T:forward_main}, one can drop the condition on vanishing of $HH_\bullet$ in the definition of a phantom. Indeed, over $\bC$, the vanishing of Hochschild homology (and also higher $K$-groups) follows from the vanishing of $K_0$ with rational coefficients \cite{GO13}*{Thm.~5.5}.
\end{rem}

\subsection{Semiorthogonal decompositions} \label{S:SOD}

A semiorthogonal decomposition of $\DCoh(X)$ is a presentation $\DCoh(X) = \langle \cB_1, \ldots, \cB_m \rangle$, where $\cB_i$ are admissible subcategories satisfying
\begin{enumerate}
    \item[(i)] $\RHom_X(b, c) = 0$ for all $b \in \cB_i$ and all $c \in \cB_j$ whenever $i > j$, and
    \item[(ii)] $\DCoh(X)$ is the smallest triangulated subcategory generated by $\cB_1,\ldots, \cB_m$.
\end{enumerate}
The conditions (i) and (ii) imply that if $\DCoh(X) = \langle \cA, \cB \rangle$ is a semiorthogonal decomposition, then every object $F \in \DCoh(X)$ fits in a unique distinguished triangle
\begin{equation*}
    b \to F \to a  \xrightarrow{+   }  \label{eq:triangle_of_sod}
\end{equation*}
with $b \in \cB$ and $a \in \cA$. 

\subsection{Spherical Functors and Spherical Twists}

Let $\cC, \cD$ be triangulated categories and let $F:\cC \to \cD$ be an exact functor admitting right and left adjoints $R,L:\cD \to \cC$. Assume 
\begin{itemize}
    \item [(i)] $\cC$ and $\cD$ admit DG enhancements, and 
    \item [(ii)] the functors $F,R,L$ descend from DG functors between enhancements. 
\end{itemize}
Then there are canonical triangles \cite{AL17} of exact functors

\[
 FR \to \textrm{Id}_\cD \to T  \xrightarrow{+   } \qquad \qquad \textrm{Id}_\cC \to RF \to C  \xrightarrow{+   } .
\]
The functor $T$ is called the \emph{twist}, and $C$ the \emph{cotwist} of $F$.

\begin{defn}[\cites{AL17,A16}]
    The functor $F$ is \emph{spherical} if $C$ is an equivalence and $R\cong CL$. 
\end{defn}

\begin{thm} [\cite{A16}*{Thm.~2.3}]
    If $F:\cC \to \cD$ is a spherical functor, then $T$ is an equivalence. 
\end{thm}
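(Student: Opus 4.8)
The plan is to exhibit an explicit quasi-inverse to $T$ built from the left adjoint $L$, and to verify it is a two-sided inverse using the two defining properties of sphericity. Write $\varepsilon\colon FR\to\mathrm{Id}_\cD$ and $\eta\colon\mathrm{Id}_\cC\to RF$ for the counit and unit of $F\dashv R$, and $\eta'\colon\mathrm{Id}_\cD\to FL$, $\varepsilon'\colon LF\to\mathrm{Id}_\cC$ for the unit and counit of $L\dashv F$, so that $T=\mathrm{cone}(\varepsilon)$ and $C=\mathrm{cone}(\eta)$. The candidate inverse is the \emph{dual twist} $T'$ defined by the triangle
\[
T'\to\mathrm{Id}_\cD\xrightarrow{\ \eta'\ }FL\xrightarrow{+}.
\]
The goal is to prove $TT'\cong\mathrm{Id}_\cD\cong T'T$, which shows $T$ is an equivalence.

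The first step, requiring only the adjunctions and not yet sphericity, is a pair of ``intertwining'' isomorphisms, modeled on
\[
TF\cong FC[1].
\]
To obtain this I would precompose the triangle $FR\xrightarrow{\varepsilon}\mathrm{Id}_\cD\to T$ with $F$ and compare it with the triangle $\mathrm{Id}_\cC\xrightarrow{\eta}RF\to C$ after applying $F$ on the left: the zig-zag identity $(\varepsilon F)\circ(F\eta)=\mathrm{id}_F$ shows that $F\eta\colon F\to FRF$ is a split monomorphism, whose complementary summand is at once $\mathrm{cone}(F\eta)\cong FC$ and $\mathrm{fib}(\varepsilon F)\cong TF[-1]$, forcing $TF\cong FC[1]$. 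A mirror computation using the zig-zag identity for $L\dashv F$ gives the analogous intertwiner for $T'$.

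Next I would feed in the spherical hypotheses. Since $C$ is invertible and $R\cong CL$ (with the formal consequence $L\cong C^{-1}R$), combining these relations with the intertwiners yields
\[
T(FL)\cong F(CL)[1]\cong FR[1],\qquad T'(FR)\cong FL[-1].
\]
Applying $T$ to the triangle defining $T'$ then produces a triangle $TT'\to T\xrightarrow{T\eta'}T(FL)\cong FR[1]$, while applying $T'$ to the triangle defining $T$ produces $T'(FR)\cong FL[-1]\to T'\to T'T$. In each case the middle map is identified, under the computed isomorphism, with the connecting morphism of the original twist triangle $FR\xrightarrow{\varepsilon}\mathrm{Id}_\cD\to T\xrightarrow{\partial}FR[1]$ (respectively its dual), whose fiber (respectively cofiber) is $\mathrm{Id}_\cD$. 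This gives $TT'\cong\mathrm{Id}_\cD$ and $T'T\cong\mathrm{Id}_\cD$, so $T$ is an equivalence with inverse $T'$.

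The main obstacle is precisely the steps I glossed as ``comparing triangles'' and ``identifying the connecting morphism.'' Every isomorphism above involves cones of natural transformations together with their functoriality in $F$, $R$, and $L$; in a bare triangulated category cones are not functorial and none of these manipulations are legitimate. This is exactly why the hypotheses demand DG enhancements through which $F$, $R$, $L$ descend: in the enhanced setting $T$, $T'$, and $C$ are honest functors defined by convolutions, their composites are strictly functorial, and the delicate compatibility of the connecting maps can be checked by an explicit contraction of the associated twisted complexes. Establishing that compatibility, rather than the formal cancellation it enables, is where the genuine content of the theorem lies.
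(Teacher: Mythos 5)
The paper offers no proof of this statement at all---it is quoted from Addington \cite{A16}*{Thm.~2.3}, with the underlying argument due to Anno--Logvinenko \cite{AL17}---so your proposal can only be measured against the standard proof in those references, which it follows in outline: the candidate inverse is the dual twist $T'$ cut out of $\mathrm{Id}_\cD \to FL$, the intertwining relation $TF \cong FC[1]$ is obtained from the two split triangles on $FRF$, and sphericity enters through invertibility of $C$ and the relation $R \cong CL$. Your splitting step is sound as far as it goes: because the section $F\eta$ and the retraction $\varepsilon F$ realize the \emph{same} idempotent $F\eta \circ \varepsilon F$ on $FRF$, the two complementary summands $\mathrm{cone}(F\eta) \cong FC$ and the fiber of $\varepsilon F$, which is $TF[-1]$, do agree, and the DG enhancement (which you correctly invoke) is what makes this identification natural in the argument.

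The genuine gap is the one you name yourself and then set aside: the claim that, under the composite isomorphism $TFL \cong FCL[1] \cong FR[1]$, the map $T\eta' \colon T \to TFL$ is carried to the connecting morphism $\partial \colon T \to FR[1]$ of the rotated twist triangle $\mathrm{Id}_\cD \to T \xrightarrow{\partial} FR[1]$ (and dually for $T'T$) \emph{is} the theorem. It is not a formal consequence of the triangle axioms even with enhancements in place; in \cite{AL17} verifying precisely these compatibilities, via explicit twisted complexes over bar resolutions, occupies the technical core of the paper. Relatedly, your parenthetical ``with the formal consequence $L \cong C^{-1}R$'' treats $R \cong CL$ as an abstract isomorphism of functors, whereas the definition requires the \emph{canonical} transformation $R \xrightarrow{R\eta'} RFL \to CL$ (built from the adjunction unit and the projection $RF \to C$) to be invertible. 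With only an abstract isomorphism, the identification $TFL \cong FR[1]$ has no reason to intertwine $T\eta'$ with $\partial$, and the final cancellation collapses. So the architecture of your argument is the right one, but the load-bearing step is asserted rather than proved: what you have is an accurate roadmap of the Anno--Logvinenko proof, not a proof.
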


\begin{ex}[Restrictions to divisors]
    Let $X$ be a smooth projective variety over $k$ and let $\iota: D \hookrightarrow X$ be a divisor on $X$. Take $F=L\iota ^\ast : \DCoh(X) \to \Perf(D)$. Then $R= R\iota_\ast$ and $L = \iota_!\cong R\iota_\ast \left(  \omega_\iota[-1] \otimes - \right )$, where $\omega_\iota$ is the relative canonical bundle.
    Pullbacks and pushforwards are Fourier-Mukai functors, so $F,R,L$ descend from DG functors on enhancements \cite{T07}. Note that $RF \cong \cO_D \otimes -$, thus $C = \cO_X(-D) [1]\otimes -$, and $T = \cO_D(-D)[2] \otimes -$ is the spherical twist. It follows that $L\iota^\ast$ is a spherical functor.
\end{ex}

Spherical functors are a generalization of the notion of a spherical object introduced by Seidel-Thomas as a categorification of Dehn twists \cite{ST01}. An object $\cF \in \DCoh(X)$ is \emph{spherical} if 
\[
    \Ext^i_X ( \cF, \cF ) = \left\{
    \begin{array}{ll}
         k & \text{if } i = 0, \dim X \\
         0 & \text{otherwise.}
    \end{array} \right. 
\]
Considering $\DCoh(X)$ as a pre-triangulated dg category, a spherical object $\cF$ defines an autoequivalence $T_\cF: \DCoh(X) \to \DCoh(X)$ of dg-categories,
\[
    T_\cF(a) = \text{Cone} ( \RHom(\cF, a) \otimes \cF \to a),
\]
called the spherical twist around $\cF$. If $\cF \in \DCoh(X)$ is a spherical object, then $F = \cF \otimes - : \DCoh(k) \to \DCoh(X)$ is a spherical functor whose right adjoint is given by $\RHom(\cF, -)$. The cotwist of $F$ is the shift functor $[-n]$ and the twist of $F$ is precisely $T_\cF$ \cite{A16}*{Sect.~2}.

Following the ideas in \cite{P23}, the main technical tool is the autoequivalence, introduced by Addington \cite{A16}, of the derived category of an anti-canonical divisor that is induced by an admissible subcategory $\cB \subset \DCoh(X)$.

\begin{thm}[\cite{P23}*{Thm.3.2, Cor.~3.6}] \label{C:triangle_on_anticanonical} Let $X$ be a smooth projective variety over $k$ and let $j: E \hookrightarrow X$ be an anti-canonical divisor on $X$. Let $\iota: \cB \subset \DCoh(X)$ be an admissible subcategory and let $B_R \in \DCoh(X\times X)$ denote the Fourier-Mukai kernel of the right adjoint of $\iota$. Then the composition $Lj^\ast \iota$ is a spherical functor. In particular, there is an exact triangle in $\DCoh(E\times E)$
\begin{equation}
    B_R\vert_{E\times E} \to \cO_{\Delta_E} \to \cT  \xrightarrow{+   }  \label{eq:triangle_of_kernels}
\end{equation}
such that the Fourier-Mukai functor with kernel $\cT$ induces an autoequivalence $T: \Perf(E) \to \Perf(E)$. Thus any object $\cF \in \Perf(E)$ is part of a distinguished triangle 
\begin{equation}
    Lj^\ast \iota_R (j_\ast \cF) \to \cF \to T(\cF)  \xrightarrow{+   } \label{eq:triangle_on_anticanonical_div}
\end{equation}
in $\Perf(E)$.
\end{thm}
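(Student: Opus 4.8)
The plan is to prove that $G := Lj^\ast\iota \colon \cB \to \Perf(E)$ is spherical \emph{directly} from the definition, by computing its cotwist and verifying the relation $R \cong CL$, and then to extract the kernel triangle \eqref{eq:triangle_of_kernels} and the twist triangle \eqref{eq:triangle_on_anticanonical_div} from the general spherical formalism. Write $n := \dim X$. Since $\iota$ is admissible and $Lj^\ast$ has right adjoint $Rj_\ast$ and left adjoint $j_! \cong Rj_\ast(\omega_j[-1]\otimes -)$, the composite $G$ has right adjoint $R_G = \iota_R Rj_\ast$ and left adjoint $L_G = \iota_L j_!$; all of these are Fourier--Mukai functors that descend from DG functors, so the setup of the spherical formalism applies. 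Two consequences of $E$ being anti-canonical will be used repeatedly: $\cO_X(-E)\cong\omega_X$, and by adjunction $\omega_E\cong\cO_E$, so that (as $E$ is a Cartier divisor in a smooth variety, hence Gorenstein) the Serre functor of $\Perf(E)$ is the shift $[n-1]$.

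First I would compute the cotwist $C_G$, which sits in the triangle $\mathrm{Id}_\cB \to R_G G \to C_G \xrightarrow{+}$. Tensoring the fundamental triangle $\cO_X(-E)\to\cO_X\to j_\ast\cO_E\xrightarrow{+}$ with $\iota(-)$ and applying $\iota_R$, while using the projection formula $Rj_\ast Lj^\ast\cong(-)\otimes Rj_\ast\cO_E$, full faithfulness $\iota_R\iota\cong\mathrm{Id}$, and $\cO_X(-E)\cong\omega_X$, yields a triangle $\iota_R(\omega_X\otimes\iota(-))\to\mathrm{Id}_\cB\to R_G G\xrightarrow{+}$ whose first map is identified with the unit of $(G,R_G)$. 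Hence $C_G\cong\iota_R(\omega_X\otimes\iota(-))[1]$. Recognizing $\iota_R(\omega_X[n]\otimes\iota(-))$ as the Serre functor $S_\cB$ of the admissible subcategory $\cB$ (via the standard identity $S_\cB\cong\iota_R S_X\iota$), I would conclude $C_G\cong S_\cB[1-n]$, which is an equivalence.

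Next I would verify $R_G\cong C_G L_G$. Rather than computing $L_G$ by hand, I use the general identity between adjoints of a functor between categories with Serre functors, $L_G\cong S_\cB^{-1}R_G\, S_{\Perf(E)}$. Substituting $S_{\Perf(E)}=[n-1]$ and $C_G=S_\cB[1-n]$ gives $C_G L_G\cong S_\cB[1-n]\,S_\cB^{-1}R_G[n-1]\cong R_G$, since shifts are central and cancel. Both conditions of the definition now hold, so $G$ is spherical, and by Anno's theorem its twist $T_G$ is an equivalence.

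Finally I would pass to kernels. The twist sits in $G R_G\to\mathrm{Id}_{\Perf(E)}\to T_G\xrightarrow{+}$. Writing $G R_G = Lj^\ast(\iota\iota_R)Rj_\ast$ as a Fourier--Mukai functor and convolving kernels identifies its kernel with the restriction $B_R|_{E\times E}=L(j\times j)^\ast B_R$, while $\mathrm{Id}_{\Perf(E)}$ has kernel $\cO_{\Delta_E}$; realizing the canonical natural transformation at the level of kernels then produces \eqref{eq:triangle_of_kernels} with $\cT$ its cone and $T=T_G$ the induced equivalence. Applying the twist triangle to $\cF\in\Perf(E)$ and using $R_G\cF=\iota_R(j_\ast\cF)$ gives precisely \eqref{eq:triangle_on_anticanonical_div}. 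I expect the main obstacle to be this last passage from functors to kernels---realizing the abstract spherical triangle as an honest triangle of Fourier--Mukai kernels on $E\times E$ and correctly computing $B_R|_{E\times E}$ by convolution---which is exactly where the DG-enhancement hypotheses and the compatibility of adjunction units with kernels are indispensable; a secondary point requiring care is the identification of the first map in the cotwist triangle with the unit of $(G,R_G)$.
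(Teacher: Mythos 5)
Your proposal is correct and takes essentially the same route as the paper: where the paper simply cites Addington's Prop.~2.1 (following Pirozhkov) for the sphericality of $Lj^\ast\iota$, you inline the proof of exactly that proposition, computing the cotwist $C \cong \iota_R(\omega_X\otimes\iota(-))[1] \cong S_\cB[1-n]$ and invoking $L \cong S_\cB^{-1} R\, S_{\Perf(E)}$ with $S_{\Perf(E)} = [n-1]$, which is precisely where the anti-canonical hypothesis ($\omega_E\cong\cO_E$) enters in the cited argument as well. Your passage to kernels and the identification of the Fourier--Mukai functor with kernel $B_R\vert_{E\times E}$ as $Lj^\ast\iota_R j_\ast$ coincide with what the paper calls a ``direct computation,'' the only caveat being the one you already flag: under the strict Anno--Logvinenko formalism one should check that the \emph{canonical} map $R \to CL$ is an isomorphism rather than producing an abstract one, though the paper's own definition of spherical functor is stated loosely enough that your verification suffices.
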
 

\begin{proof}
The pullback functor $Lj^\ast: \DCoh(X) \to \Perf(E)$ is spherical with cotwist given by tensoring by $\cO_X(-E)[1]$. Since $E$ is an anti-canonical divisor, 
\cite{A16}*{Prop.~2.1} implies that $Lj^\ast \circ \iota: \cB \to \Perf(E)$ is a spherical functor. So there is a canonical triangle of exact functors that, at the level of Fourier-Mukai kernels, gives the triangle \eqref{eq:triangle_of_kernels}. A direct computation shows that the Fourier-Mukai functor with kernel $B_R\vert_{E\times E}$ coincides with $Lj^\ast \iota_R j_\ast$.
\end{proof}

\section{Main result} \label{S:proof}

\begin{thm}\label{T:main_theorem}
    Let $X$ be a smooth complex projective surface with an effective smooth anti-canonical divisor $E$, with the property that the restriction map $\Pic(X)\to \Pic(E)$ is injective. Then $\DCoh(X)$ has no phantom categories. 
\end{thm}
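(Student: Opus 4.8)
The plan is to argue by contradiction: suppose $\cB \subset \DCoh(X)$ is a nonzero phantom. By \Cref{R:rational_K_0_controls_K_motive}, over $\bC$ the phantom condition reduces to $K_0(\cB) = 0$ (with the Hochschild vanishing coming for free). The central object to study is the spherical twist autoequivalence $T : \Perf(E) \to \Perf(E)$ furnished by \Cref{C:triangle_on_anticanonical}, together with the triangle \eqref{eq:triangle_on_anticanonical_div}
\[
    Lj^\ast \iota_R (j_\ast \cF) \to \cF \to T(\cF) \xrightarrow{+}
\]
that holds for every $\cF \in \Perf(E)$. The strategy is to show that, because $\cB$ is a phantom, the first term $Lj^\ast \iota_R(j_\ast \cF)$ is forced to be numerically trivial, so that $T$ must act trivially on numerical/$K$-theoretic invariants of $\Perf(E)$, and then to contradict this by a concrete computation on the smooth (anti-canonical) curve $E$ using the injectivity of $\Pic(X) \to \Pic(E)$.

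First I would pin down the $K$-theoretic consequence of the phantom hypothesis. Since $\iota_R$ lands in $\cB$ and $\cB$ is a phantom, every object $\iota_R(j_\ast \cF)$ has class zero in $K_0(\cB)$, hence zero class in $K_0(X)$; pulling back along $Lj^\ast$ gives $[Lj^\ast \iota_R(j_\ast \cF)] = 0$ in $K_0(E)$. Passing to the triangle \eqref{eq:triangle_on_anticanonical_div} in Grothendieck groups then yields $[T(\cF)] = [\cF]$ in $K_0(E)$ for all $\cF$. In other words, the autoequivalence $T$ acts as the identity on $K_0(E)$ (or at least on its numerical quotient). The second, and I expect \emph{main}, step is to rule this out. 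Here the structure of $T$ as a spherical twist is decisive: by \eqref{eq:triangle_of_kernels} the kernel of $T$ fits in a triangle $B_R|_{E\times E} \to \cO_{\Delta_E} \to \cT$, and one should extract from the geometry of $E$ how $T$ moves numerical classes. On a smooth curve the numerical $K$-theory is governed by rank and degree, and a nontrivial spherical-type twist typically shears these against each other; the hypothesis that $\Pic(X) \to \Pic(E)$ is injective is what should guarantee that the twist datum coming from $\cB$ is genuinely nontrivial on $E$ rather than collapsing.

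Concretely, I would try to identify the action of $T$ on numerical $K$-theory of $E$ explicitly. Because $T$ is the spherical twist attached to the spherical functor $Lj^\ast\iota$, its effect on classes is computed by an Euler-pairing formula: $T(\cF)$ differs from $\cF$ by a correction supported on the image of the restricted kernel. The injectivity of $\Pic(X) \to \Pic(E)$ should translate into the statement that the relevant line-bundle classes restricted to $E$ remain distinct, so that the Euler form on $E$ detects the twist. Assembling a class $\cF$ (for instance a structure sheaf of a point, or a suitable line bundle on $E$) whose image $T(\cF)$ has provably different numerical class then contradicts $[T(\cF)] = [\cF]$, completing the argument.

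The hardest part will be the second step: converting the abstract triangle \eqref{eq:triangle_of_kernels} into a usable formula for the action of $T$ on $K_0(E)$, and then seeing precisely where injectivity of $\Pic(X) \to \Pic(E)$ enters to force a contradiction. I would expect to need Pirozhkov's support-theoretic analysis \cite{P23} to control how $B_R|_{E\times E}$ sits relative to the diagonal, and possibly to restrict attention to the numerical (rather than full) Grothendieck group of $E$, where rank and degree give just enough room to detect a nontrivial twist while keeping the computation tractable.
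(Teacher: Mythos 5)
Your first step is correct and coincides with how the paper begins: since $\iota_R(j_\ast\cF)\in\cB$ and $K_0(\cB)=0$, the class $[Lj^\ast\iota_R(j_\ast\cF)]$ vanishes in $K_0(E)$, so $[T(\cF)]=[\cF]$ for all $\cF$. But your proposed main step --- ruling this out by exhibiting some $\cF$ with $[T(\cF)]\neq[\cF]$ in (numerical) $K_0(E)$ --- cannot succeed, because there is nothing to rule out: the identity functor acts trivially on $K_0(E)$, nothing in the spherical-functor formalism forces the twist of $Lj^\ast\iota$ to shear rank against degree, and indeed you have just \emph{proved} that $T$ acts trivially on $K_0(E)$ under the phantom hypothesis. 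Triviality on $K_0$ is a constraint to be exploited, not a statement to be contradicted. The paper uses it positively: applying the triangle \eqref{eq:triangle_on_anticanonical_div} to a skyscraper $\cO_p$, the cone $C_p$ is a spherical object of class $[\cO_p]$ on the genus-one curve $E$, hence $C_p\cong\cO_p[2a]$ by the Burban--Kreussler classification \cite{BK06}*{Prop.~4.13}; rigidity \cite{HVdB07}*{Prop.~4.2} makes $a=0$ uniformly in $p$, so $T\cong\cL\otimes-$; applying $T$ to $\cO_E$ and using $K_0$-triviality once more forces $\cL\cong\cO_E$; and since $\Hom_{E\times E}(\cO_\Delta,\cO_\Delta)$ is one-dimensional, the natural map $\mathrm{Id}\to T$ is either zero or an isomorphism, the latter giving $Lj^\ast\iota_R j_\ast=0$.

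Even after $T\cong\mathrm{Id}$ is established, the contradiction comes not from $K$-theory but from support theory on $X$, which your sketch only gestures at via a general appeal to \cite{P23}. Concretely, the paper needs: (i) a very general point $p\in E$ (no nonzero multiple of $[p]$ in the image of $\Pic(X)\to\Pic(E)$) so that $\mathrm{Supp}(B_p)$, connected by \cite{P23}*{Lem.~6.3} and meeting $E$ only at $p$, must equal $\{p\}$, which is impossible inside a phantom by \cite{P23}*{Lem.~6.16} --- this is what excludes the map $\mathrm{Id}\to T$ being zero; and (ii) once $Lj^\ast\iota_R j_\ast=0$, the projection $B_x$ of an arbitrary skyscraper has support disjoint from $E$, and here the Picard hypothesis enters \emph{geometrically}: a curve $C$ disjoint from $E$ would make $\cO_X(C)$ a nontrivial line bundle restricting trivially to $E$, contradicting injectivity. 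Hence $\mathrm{Supp}(B_x)$ is a single point, again impossible by \cite{P23}*{Lem.~6.16}. So your intended use of the hypothesis $\Pic(X)\hookrightarrow\Pic(E)$ (to make $T$ numerically nontrivial) is misdirected --- its actual role is to forbid curves on $X$ missing $E$, or meeting it only at the chosen very general point --- and the genuinely missing ingredients are the classification of spherical objects on $E$, the rigidity argument pinning down the shift, and Pirozhkov's connectedness and no-punctual-support lemmas.
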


\begin{proof}[Proof of \Cref{T:main_theorem}]
    
We denote the closed embedding map $E\to X$ by $j$. 
Let $\langle \cA,\cB\rangle$ be a semiorthogonal decomposition of $D^b(X)$ with $\cB$ a phantom category. 

\smallskip
{\bf Step 1.} Let $p$ be a point on $E$ such that no nonzero integer multiple of $[p]$ lies in the image of $\Pic(X)$ under the restriction map. Such points exist and form a dense subset since the complement is a countable subset of $E$. Indeed, $|-K_X|\neq \emptyset$ implies that $X$ is rational, and thus $\Pic(X)$ is finitely generated. We then consider the distinguished triangles
\begin{equation}
    B_p\to j_\ast \cO_p\to A_p \xrightarrow{+   } \label{eq:triangle_1}
\end{equation}
and
\begin{equation}
    Lj^\ast B_p \to \cO_p \to C_p \xrightarrow{+   }, \label{eq:triangle_2}
\end{equation}
 where \eqref{eq:triangle_1} is the triangle induced by the semiorthogonal decomposition (see \Cref{S:SOD}), and \eqref{eq:triangle_2} is the triangle from \Cref{C:triangle_on_anticanonical}. In particular, $C_p$ is a spherical object of $D^b(E)$. Note that since $B_p\in \cB$, the $K$-theory classes of $B_p$ and $Lj^\ast B_p$ are zero. Thus, $C_p$ has the same class as $\cO_p$ and is therefore isomorphic to $\cO_p[2a]$ for some $a\in \bZ$ by classification of spherical objects in $D^b(E)$ \cite{BK06}*{Prop.~4.13}. This implies that $Lj^\ast B_p$ is supported at $p$ (or is zero).

Suppose that $B_p$ is nonzero. By \cite{P23}*{Lem.~6.3}, the support of $B_p$ is a connected closed subset of $X$ that contains $p$. For $q\in E, q\neq p$ we have 
\[
\RHom(B_p, j_\ast\cO_q) = \RHom(Lj^\ast B_p,\cO_q) = 0,
\]
thus ${\rm Supp}(B_p)\cap E =\{p\}$. It follows that ${\rm Supp}(B_p)=\{p\}$ because there are no curves on $X$ that intersect $E$ only at $p$, by our assumption on $p$. However, this contradicts the condition that $\cB$ is a phantom, since phantoms do not contain objects with zero-dimensional support by \cite{P23}*{Lem.~6.16}. We thus conclude that $B_p=0$ which means that $j_\ast\cO_p \in \cA$.
    
\smallskip
{\bf Step 2.}
The first part of the argument of Step 1 shows that $C_p$ is isomorphic to $\cO_p[2a]$ for some integer $a$ for \emph{any} point $p\in E$. Note that this $a$ must be zero for $p$ in very general position, since for such $p$ we proved that $B_p=0$. By \cite{HVdB07}*{Prop.~4.2}, it follows that $a=0$ for all $p$ and the autoequivalence $T$ from \eqref{eq:triangle_on_anticanonical_div} is given by $T \cong \cL \otimes -$. We can apply this autoequivalence to $\cO_E$ to get
\[
Lj^\ast B_{\cO} \to \cO_E\to \cL\stackrel {+}{\to}
\]
where $ B_{\cO} = \iota_R (j_\ast\cO_E)$. Since the $K$-theory class of $Lj^\ast B_{\cO}$ is trivial, we see that $\cL\cong \cO$. Then the morphism of kernels on $E\times E$ that corresponds to ${\rm Id}\to T$ is given by some element $\psi$ of $\Hom_{E\times E}(\cO_\Delta,\cO_\Delta)$. This morphism space is one-dimensional, and the map $\psi$ can not be zero, as this would contradict the conclusion of Step 1 for $p$ in very general position. Consequently, $\psi$  is an isomorphism, thus the functor $Lj^\ast \iota_R j_\ast $ is zero. This implies that $Lj^\ast B_p$ is zero for all $p\in E$. As a result, for any point $q\in E$ we have $\Hom(B_p,j_\ast \cO_q)=0$, which means that the support of $B_p$ is disjoint from $E$, so $B_p$ must be zero. Therefore, for all $p\in E$ we have  $j_\ast \cO_p\in \cA$. 

\smallskip
{\bf Step 3.} We now know that for any $p\in E$ the skyscraper sheaf $j_\ast \cO_p$ lies in $\cA$. If $\cB$ is nontrivial, there exists a closed point $x\in X$ such that in the triangle
\[
B_x\to \cO_x \to A_x \xrightarrow{+   }
\]
the object $B_x$ is nonzero. Since $\Hom(\cB,\cA)=0$, we have 
$\RHom(B_x,j_\ast\cO_p)=0$ for all points $p\in E$. This means that ${\rm Supp}(B_x)$ is disjoint from $E$. By the injectivity of the restriction map, this means that ${\rm Supp}(B_x)$ can not have one-dimensional irreducible components. Since ${\rm Supp}(B_x)$ is connected by \cite{P23}*{Lem.~6.3}, we see that ${\rm Supp}(B_x)=\{x\}$. However, this contradicts the assumption of $\cB$ being a phantom, see \cite{P23}*{Lem.~6.16}.
\end{proof}

\bibliography{bib_phantom}{}
\bibliographystyle{plain}
\end{document}